\newtheorem{lemma}{Lemma}
\newtheorem{theorem}{Theorem}
\newtheorem{corollary}{Corollary}
\newtheorem*{lemma*}{Lemma}
\newtheorem*{theorem*}{Theorem*}
\newtheorem*{proposition*}{Proposition}
\newtheorem*{corollary*}{Corollary}
\newtheorem*{conjecture*}{Conjecture}
\theoremstyle{definition}
\newtheorem*{definition*}{Definition}
\newtheorem*{remark*}{Remark}
\newtheorem*{assumption*}{Assumption}
\author{P. Fortuny Ayuso}
\address{Dpt. of Mathematics, Univ. of Oviedo, Oviedo, Spain.}
\email{fortunypedro@uniovi.es}
\date{\today}
\title[Number of Puiseux exponents of an invariant curve]{On the number of Puiseux exponents of an invariant branch of a vector field}
\subjclass{32S05, 32S65, 14H20}
\begin{document}
%\begin{frontmatter}
%\author{P. Fortuny Ayuso}
%\address{Dpt. of Mathematics, Univ. of Oviedo, Oviedo, Spain.}
%\ead{fortunypedro@uniovi.es}

%\author{J. Rib\'on}
%\date{\today}
%\title{On the number of Puiseux exponents of an invariant branch of a vector field}
\maketitle
\begin{abstract}
  We show that the multiplicity of a plane analytic $1-$form is a bound for the number of Puiseux exponents of a (formal or convergent) branch. This is true whether the associated foliation is dicritical or not. %In the non-dicritical case we can improve the bound.
\end{abstract}

% \begin{keyword}
%   Singular Holomorphic Foliation\sep
%   Invariant curve\sep
%   Puiseux exponents\sep
%   Multiplicity\sep
%   Poincaré Problem\sep
%   \MSC[2010] 32S05\sep
%   \MSC[2010] 32S65\sep
%   \MSC[2010] 14H20\sep
% \end{keyword}
%\end{frontmatter}

\maketitle
\section{Introduction. The Newton-Puiseux polygon}
Among the problems related to the complexity of the invariant curves of a germ of singular analytic foliation in the plane ($1-$form or, equivalently, analytic vector field) ---the most famous one being the Poicaré Problem, see \cite{Poincare2},  \cite{Cerveau-Neto-1991} and \cite{Carnicer}, for example--- one of the open questions is whether the number of Puiseux exponents of such a curve can be bounded in terms of the local invariants of the singularity of the foliation. In this note we prove that the multiplicity of the singularity of the $1-$form is such a bound: an invariant branch can have at most as many Puiseux exponents as the minimum order of the coefficients of the $1-$form plus one.
%We can improve the bound if we assume the vector field is non-dicritical.

The main tool is the Newton-Puiseux polygon, whose construction can be found in \cite{Ince} and, more adapted to the modern notation, in \cite{CanoJ}. We give, in this introduction, the most concise summary of its construction, for the sake of completeness. Notice that we omit general arguments about existence and convergence because they are of no use to us.

\subsection{The Newton-Puiseux construction}
In the most general case we shall need, we consider a formal $1-$form
\begin{equation}
  \label{eq:1-form}
  \omega = a(x,y) dx + b(x,y) dy
\end{equation}
where $a(x,y)$ and $b(x,y)$ are power series in $y$ whose coefficients belong to some ring of formal power series $\mathbb{C}[[x^{1/n}]]$ for some $n\in \mathbb{N}$. We assume $a(0,0)=b(0,0)=0$ (i.e. the form is \emph{singular}). Let $\Gamma=\sum f_kx^{k/m}$ be a formal power series with $k\in \mathbb{N}$ for $k\geq m$ and $m\in \mathbb{N}$ (i.e. $\Gamma$ is a Puiseux expansion of a formal branch transverse to $x=0$). We say that $\Gamma$ is \emph{invariant} for $\omega$ if
\begin{equation*}
  \textstyle
  a\big(x, \sum f_k x^{k/m}\big) +
  b\big(x, \sum f_kx^{k/m}\big)\big(\sum k f_kx^{k/m-1}\big) = 0.
\end{equation*}
Given $\omega$, we construct its \emph{cloud of points} as the set
\begin{equation*}
  \mathcal{C}(\omega) = \left\{ (i,j)\in  \frac{\mathbb{Z}_{\geq -1}}{n}
    \times \mathbb{N}\,:\,
  a_{i,j}\neq 0 \mbox{ or } b_{i+1,j-1}\neq 0
  \right\}
\end{equation*}
where $a(x,y)=\sum a_{ij}x^iy^j$ and $b(x,y)=\sum b_{ij}x^iy^j$. The \emph{Newton Polygon} of $\omega$ is the following set:
\begin{equation*}
  \mathcal{N}(\omega) = \mathrm{convex\ envelope} \left( \left\{
  (i,j) + \mathbb{R}_{\geq 0}\times \mathbb{R}_{\geq 0} \,:\, (i,j) \in \mathcal{C}(\omega)
    \right\} \right).
\end{equation*}
For a rational number $\mu\in \mathbb{Q}$, with $\mu\geq 1$, let $L_{\mu}$ be the unique line of slope $-1/\mu$ (we say that $L_{\mu}$ has \emph{co-slope} $\mu$) in $\mathbb{R}^2$ which meets $\mathcal{N}(\omega)$ only at its topological border and let $(\tau,0)$ be the point at which $L_{\mu}$ meets the $OX$ axis. Let $\omega_{\mu}=a(x, cx^{\mu} + \overline{y})dx + b(x, cx^{\mu} + \overline{y}) d (cx^{ \mu}+\overline{y})$ be the $1-$form corresponding to the change of variables $y= cx^{\mu} + \overline{y}$. The following results are well-known \cite{CanoJ}:
\begin{lemma}
  If there is an invariant curve whose Puiseux expansion starts with $cx^{\mu}$ then the Newton polygon $\mathcal{N}(\omega_{\mu})$ of $\omega_{\mu}$ meets $OX$ only at points with abscissa strictly greater than $\tau$, if at all.
\end{lemma}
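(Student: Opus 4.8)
The plan is to analyze what it means for a curve starting with $cx^\mu$ to be invariant, by substituting $y = cx^\mu + \overline{y}$ and tracking how the Newton polygon transforms. The key observation is that invariance of a branch beginning with $cx^\mu$ is exactly the condition that $\overline{y} = 0$ is a (formal) solution of $\omega_\mu = 0$ to leading order, i.e. that the transformed branch $\overline{\Gamma} = \Gamma - cx^\mu$ is invariant for $\omega_\mu$ and has Puiseux expansion starting at an exponent strictly greater than $\mu$.

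First I would set up the coordinate change explicitly. Writing $\omega_\mu = \overline{a}(x,\overline{y})\,dx + \overline{b}(x,\overline{y})\,d\overline{y}$, I would compute $\overline{a}$ and $\overline{b}$ in terms of $a$, $b$, and the substitution $y = cx^\mu + \overline{y}$. The crucial point is the relationship between the line $L_\mu$ of co-slope $\mu$ and the behaviour on the $OX$ axis after substitution: the monomials of $\omega$ lying on $L_\mu$ are precisely those that contribute, after setting $\overline{y}=0$, to the terms of lowest $x$-order in $\overline{a}(x,0)$. The abscissa $\tau$ where $L_\mu$ meets $OX$ records this lowest possible order.

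Next I would invoke invariance. If $\Gamma = cx^\mu + (\text{higher order})$ is invariant, then substituting it into the invariance equation forces the leading terms—those coming from the $L_\mu$ edge—to cancel. This cancellation is what pushes the $OX$-intercept of $\mathcal{N}(\omega_\mu)$ strictly past $\tau$: the coefficient of $x^\tau$ in $\overline{a}(x,0)$ (the characteristic-polynomial-type expression in $c$ attached to the edge $L_\mu$) must vanish because $c$ is a root of it, by invariance. I would argue that all monomials of $\omega_\mu$ meeting $OX$ at abscissa $\le \tau$ are supported on the edge $L_\mu$ and collapse to this single coefficient, so their vanishing removes any point of $\mathcal{N}(\omega_\mu)$ on $OX$ with abscissa $\le \tau$, leaving only points with abscissa strictly greater than $\tau$ (if any remain at all).

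The main obstacle I expect is the bookkeeping needed to show that the coefficient of $x^\tau$ in $\overline{a}(x,0)$ is exactly the edge polynomial evaluated at $c$, and that invariance forces $c$ to be its root. This requires correctly accounting for the contribution of $b\,dy$ under the substitution—in particular the term $b(x,cx^\mu+\overline{y})\cdot \mu c x^{\mu-1}$, whose interaction with the shift by one unit in the definition of $\mathcal{C}(\omega)$ (the $b_{i+1,j-1}$ index) is precisely what makes the cloud-of-points and the $d(cx^\mu)$ contribution align on the same edge. Once this alignment is verified, the vanishing of the edge polynomial at $c$ is the standard Newton-Puiseux step and the strict inequality on the intercept follows immediately.
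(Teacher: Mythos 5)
The paper itself offers no proof of this lemma—it is stated as well known with a citation to \cite{CanoJ}—and your sketch is precisely the standard Newton--Puiseux argument that citation refers to, correctly adapted to the $1$-form setting. In particular, you identify the right key point: the index shift $b_{i+1,j-1}$ in $\mathcal{C}(\omega)$ aligns the $b\,dy$ contribution with $a\,dx$ along $L_\mu$, so the coefficient of $x^\tau$ in $\overline{a}(x,0)=a(x,cx^\mu)+\mu c\,x^{\mu-1}b(x,cx^\mu)$ is the edge polynomial $\sum_{i+\mu j=\tau}\left(a_{ij}+\mu\, b_{i+1,j-1}\right)c^{j}$, invariance forces it to vanish, and since points of $\mathcal{C}(\omega_\mu)$ on $OX$ can come only from $\overline{a}(x,0)$ (the index $\overline{b}_{i+1,-1}$ being empty), the intercept moves strictly past $\tau$, if it exists at all.
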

Considering the branch $\Gamma\equiv \sum_{k\geq m} f_kx^{k/m}$, we may define, recursively, $\omega_{m-1}=\dots=\omega_1=\omega_0=\omega$ and, for $k\geq m$:
\begin{equation*}
  w_k = w_{k-1}(x, f_k x^{k/m} + \overline{y})
\end{equation*}
and, by recurrence, we know that if $\Gamma$ is invariant for $\omega$ then for all $k$, the line $L_{k/m}=L_{k/m}(\omega_{k-1})$ meets $OX$ strictly to the left of $\mathcal{N}(\omega_{k})$ (the following Newton polygon).

Moreover, each time a coefficient $f_{k}$ gives rise to a side on the following polygon $\mathcal{N}(\omega_{k})$, the next coefficient $f_{k+1}$ comes from a point strictly lower than the previous one:
\begin{lemma}\label{lem:lower-side}
  If $L_{k/m}$ meets $\mathcal{N}(\omega_{k})$ on a side, then the highest point of $L_{(k+1)/m}\cap \mathcal{N}(\omega_{k})$ is strictly lower than the highest point of $L_{k/m}\cap \mathcal{N}(\omega_{k-1})$.
\end{lemma}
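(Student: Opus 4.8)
The plan is to follow exactly how the single substitution $y=f_kx^{k/m}+\overline{y}$ deforms the Newton polygon along the line $L_{k/m}$, using the weight function $w(i,j)=m\,i+k\,j$, for which $L_{k/m}$ is precisely the level line $\{w=w_0\}$ with $w_0=\min\{w(i,j):(i,j)\in\mathcal{C}(\omega_{k-1})\}$. With this weight the side $S=L_{k/m}\cap\mathcal{N}(\omega_{k-1})$ is exactly the set of cloud points of minimal weight $w_0$, and its highest vertex is some $(t_0,h_0)$, where $h_0$ is the height appearing on the right-hand side of the statement. The one computation I would record is that substituting into a monomial $x^iy^j$ produces $\sum_{l=0}^{j}\binom{j}{l}f_k^{\,j-l}x^{\,i+(k/m)(j-l)}\overline{y}^{\,l}$, and a direct check shows that every resulting point has the same weight $w(i,j)$ as the point it came from. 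Thus the substitution preserves $w$: points of $S$ (weight $w_0$) are sent onto $L_{k/m}$, while points strictly above $L_{k/m}$ (weight $>w_0$) are sent strictly above it, so that the whole weight-$w_0$ part of $\omega_k$ is produced by $S$ alone.

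Next I would compute the coefficient $C_l$ created at height $l$ on $L_{k/m}$. Collecting the contributions of all points of $S$ gives $C_l=\sum_{(i,j)\in S,\,j\ge l}\binom{j}{l}f_k^{\,j-l}c_{ij}=\frac{1}{l!}\phi^{(l)}(f_k)$, where $c_{ij}$ is the combined coefficient at $(i,j)$ and $\phi(T)=\sum_{(i,j)\in S}c_{ij}T^{\,j}$ is the characteristic polynomial of the side. The top coefficient is $C_{h_0}=c_{t_0,h_0}\neq0$, so the weight-$w_0$ part of $\mathcal{N}(\omega_k)$ still reaches height $h_0$. For the bottom I invoke invariance: by the first Lemma the point $(\tau,0)=L_{k/m}\cap OX$ is no longer in $\mathcal{N}(\omega_k)$, i.e. $C_0=\phi(f_k)=0$, so $f_k$ is a root of $\phi$; writing $r\ge1$ for its multiplicity gives $C_0=\cdots=C_{r-1}=0$ and $C_r\neq0$, hence the lowest weight-$w_0$ point of $\mathcal{N}(\omega_k)$ sits at height $r$. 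The hypothesis ``$L_{k/m}$ meets $\mathcal{N}(\omega_k)$ on a side'' is then exactly the inequality $r<h_0$, and that side runs from height $r$ up to height $h_0$.

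Finally I would close with a convexity argument. Along the lower-left boundary of $\mathcal{N}(\omega_k)$ the co-slopes increase monotonically as one descends, so the supporting line $L_{(k+1)/m}$, having co-slope $(k+1)/m>k/m$, can only meet the boundary at or below the bottom vertex, at height $r$, of the side just described. Therefore the highest point of $L_{(k+1)/m}\cap\mathcal{N}(\omega_k)$ has height $\le r<h_0$, which is the asserted strict inequality. I expect the main obstacle to lie in the middle step: one must make rigorous that no other part of the polygon contributes weight-$w_0$ terms after substitution and, above all, that the combined cloud built from $a_{ij}$ and $b_{i+1,j-1}$ produces a characteristic polynomial $\phi$ for which invariance genuinely forces $f_k$ to be a root of the stated multiplicity. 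Once that identification is secured, the weight bookkeeping and the concluding convexity step are routine.
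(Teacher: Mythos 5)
The paper itself offers no proof of this lemma (it is quoted from \cite{CanoJ}), so your attempt has to be judged on its own merits. Your outer skeleton is correct and, in fact, already complete: the weight $w(i,j)=mi+kj$ is preserved by the substitution $y=f_kx^{k/m}+\overline{y}$ (provided the ``direct check'' also covers the $b\,d\overline{y}$ and $f_k\frac{k}{m}x^{k/m-1}b\,dx$ contributions, which behave just like the $a$-part under the convention that $b_{i+1,j-1}$ sits at the point $(i,j)$ --- these are exactly the terms displayed in \eqref{eq:point-one-below-1} and \eqref{eq:point-one-below-2}); hence every weight-$w_0$ point of $\mathcal{C}(\omega_k)$ comes from the side $S$ and has height at most $h_0$; the top vertex survives (at $l=h_0$ the new coefficients are just the old $a_{t_0,h_0}$ and $b_{t_0+1,h_0-1}$, not both zero); and convexity of the polygon forces the supporting line of co-slope $(k+1)/m>k/m$ to meet $\mathcal{N}(\omega_k)$ at or below the bottom vertex of the side on $L_{k/m}$, which by the side hypothesis is at height at most $h_0-1$. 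That is the whole proof.

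The genuine flaw is your middle layer, and it is where you yourself suspected trouble. For a $1$-form each cloud point $(i,j)$ carries \emph{two} coefficients, $a_{ij}$ and $b_{i+1,j-1}$, and the point at height $l$ on $L_{k/m}$ survives in $\mathcal{C}(\omega_k)$ iff at least one of the new pair $(A_l,B_l)$ (the $dx$- and $d\overline{y}$-parts) is nonzero. The combined polynomial $\phi(T)=\sum_{(i,j)\in S}\bigl(a_{ij}+\frac{k}{m}b_{i+1,j-1}\bigr)T^j$ computes only the combination $\frac{1}{l!}\phi^{(l)}(f_k)=A_l+\frac{k}{m}B_l$. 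Consequently $\phi^{(l)}(f_k)=0$ for $l<r$ does \emph{not} make those points disappear (they persist whenever $A_l=-\frac{k}{m}B_l\neq 0$), so your claims that ``the lowest weight-$w_0$ point of $\mathcal{N}(\omega_k)$ sits at height $r$'' and that the side hypothesis ``is exactly the inequality $r<h_0$'' are false in general. The failure is most blatant for dicritical sides, where $a_{ij}+\frac{k}{m}b_{i+1,j-1}=0$ identically on $S$: then $\phi\equiv 0$, invariance imposes no condition, there is no root multiplicity $r$ to speak of, and likewise your $C_{h_0}=c_{t_0,h_0}$ vanishes even though the top vertex survives --- and the dicritical case is precisely one the paper insists on covering. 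Fortunately none of this is needed: the hypothesis of the lemma already hands you a nondegenerate side of $\mathcal{N}(\omega_k)$ on $L_{k/m}$, weight preservation caps its top at $h_0$, and your concluding convexity step finishes. Excise the characteristic-polynomial computation (or replace it by the pair bookkeeping above, as in the paper's own proof of Lemma \ref{lem:strict-decrease}, where the conclusion drawn is only that one of the two coefficients is nonzero) and the argument is sound.
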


\section{The multiplicity bounds the number of Puiseux exponents}
Let $\omega$ and $\Gamma$ be as above (admitting rational exponents in $x$, but with a common denominator). We define the $y-$order of $\omega$ as the the ordinate of the highest point $(i,j)\in \mathcal{N}(\omega)\cap L_1$, where $L_1$ is the line of co-slope $1$ meeting $\mathcal{N}(\omega)$ on its border. Consider $\omega_k$ for $k\in \mathbb{N}$, as above. For each $k\in \mathbb{N}$, let $q_k$ be the product of the denominators of the Puiseux exponents up of $\Gamma$ to $k/m$. The \emph{multiplicity of $\omega$} is the smallest order of $a(x,y), b(x,y)$ plus one.
\begin{theorem}\label{the:bound-number-puiseux-exponents}
  If the branch $\Gamma$, transverse to $x=0$, is invariant for $\omega$ and has $r$ Puiseux exponents, then the $y-$order of $\omega$ is at least $r$. As a consequence, the multiplicity of $\omega$ is at least the largest number of Puiseux exponents of an invariant branch.
\end{theorem}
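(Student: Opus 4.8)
The plan is to run the Newton--Puiseux process term by term and to track the vertical size of the successive polygons. For $k\geq m$ let $h_k$ denote the ordinate of the highest point of $L_{k/m}\cap\mathcal{N}(\omega_{k-1})$. Because $\omega_{m-1}=\omega$ and $L_{m/m}=L_1$, the very first of these numbers is $h_m=\,$($y$-order of $\omega$). The argument then splits into two claims: (i) the sequence $(h_k)_{k\geq m}$ is non-increasing and drops strictly at every index $k$ for which $L_{k/m}$ meets $\mathcal{N}(\omega_k)$ along a side; and (ii) each of the $r$ Puiseux exponents of $\Gamma$ produces such a side. Granting both, the sequence $(h_k)$ undergoes at least $r$ strict drops while staying $\geq 0$ (every ordinate in a Newton polygon being a non-negative integer), so $h_m\geq r$; that is, the $y$-order is at least $r$.

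Claim (i) is essentially bookkeeping on top of Lemma~\ref{lem:lower-side}. The strict drop at a side, $h_{k+1}<h_k$, is exactly its statement: if $L_{k/m}$ meets $\mathcal{N}(\omega_k)$ on a side, the highest point of $L_{(k+1)/m}\cap\mathcal{N}(\omega_k)$ lies strictly below the highest point of $L_{k/m}\cap\mathcal{N}(\omega_{k-1})$. For the monotonicity at the remaining indices I would check that the substitution $\bar y\mapsto f_kx^{k/m}+\bar y$ followed by the steepening of the co-slope from $k/m$ to $(k+1)/m$ can only push the relevant highest vertex down, since in the region above $L_{k/m}$ the peeled polygon $\mathcal{N}(\omega_k)$ does not rise above $\mathcal{N}(\omega_{k-1})$. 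This gives $h_{k+1}\leq h_k$ for every $k$.

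Claim (ii) is the heart of the matter and the step I expect to be the main obstacle. A Puiseux exponent is an index $k/m$ at which the accumulated denominator jumps, $q_k>q_{k-1}$. The idea is that between consecutive Puiseux exponents the denominator is constant, so one may pass to the ramified chart $x=t^{q_{k-1}}$ in which all previously treated exponents become integral and $\mathcal{N}(\omega_{k-1})$ sits on the integer lattice. In that chart the co-slope $k/m$ is still non-integral, its denominator being precisely the ramification factor $q_k/q_{k-1}>1$; consequently the indicial equation that the coefficient $f_k$ must satisfy has degree a multiple of this factor, hence degree at least $2$, and an equation of positive degree can only be read off a compact side of the polygon, never off a single vertex. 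Thus $L_{k/m}$ meets $\mathcal{N}(\omega_k)$ along a genuine side and Lemma~\ref{lem:lower-side} applies. The non-emptiness of this intersection, needed so that the ``side'' is not vacuous, is guaranteed by invariance through the first Lemma, which forces $L_{k/m}$ to meet $OX$ strictly to the left of $\mathcal{N}(\omega_k)$. The delicate part I anticipate is the denominator bookkeeping across the successive substitutions --- in particular verifying that distinct Puiseux exponents cannot be absorbed into a single side, so that the $r$ exponents really do yield $r$ separate strict drops.

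For the stated consequence I would bound the $y$-order by the multiplicity straight from the cloud of points. Each $(i,j)\in\mathcal{C}(\omega)$ arises either from a monomial $a_{ij}x^iy^j$ or from a monomial $b_{i+1,j-1}x^{i+1}y^{j-1}$, and in both cases $i+j$ equals the order of that monomial; hence the co-slope-$1$ line is $L_1=\{\,i+j=\min(\ord a,\ord b)\,\}$. Its highest point has maximal ordinate, attained at minimal abscissa, and the smallest abscissa a cloud point can have is $-1$, produced by a pure $dy$-monomial $b_{0,j-1}y^{j-1}$. Therefore the largest ordinate on $L_1$ is $\min(\ord a,\ord b)+1$, which is exactly the multiplicity of $\omega$, so ($y$-order)$\,\leq\,$multiplicity. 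Combining with the first part, multiplicity $\geq\,$($y$-order)$\,\geq r$ for every invariant branch $\Gamma$, which is the asserted bound.
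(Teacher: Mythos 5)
Your skeleton --- track the height of the support point along the Newton--Puiseux process, force a strict drop at each Puiseux exponent, and bound the $y$-order by the multiplicity via the cloud of points --- is the paper's, and your final paragraph proving ($y$-order) $\leq$ (multiplicity) is correct. The gap is in your Claim (ii), which you rightly identify as the heart. Your degree count concerns the polygon $\mathcal{N}(\omega_{k-1})$ \emph{before} the substitution: the indicial equation for $f_k$ is read off the side of co-slope $k/m$ of the \emph{old} polygon. But Lemma \ref{lem:lower-side} requires a side of $\mathcal{N}(\omega_{k})$, the polygon \emph{after} the substitution $y=f_kx^{k/m}+\overline{y}$; a priori that substitution could cancel every cloud point of $L_{k/m}\cap\mathcal{N}(\omega_{k-1})$ below the top vertex $P=(i,j)$, leaving $L_{k/m}$ touching $\mathcal{N}(\omega_k)$ in the single vertex $P$, in which case no drop is forced. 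Ruling this out is exactly the content of the paper's Lemma \ref{lem:strict-decrease}: since $q_k=sq_{k-1}$ with $s>1$, the intermediate points $\bigl(i+l\frac{k}{m},\,j-l\bigr)$ have abscissae outside $\frac{1}{q_{k-1}}\mathbb{Z}$ and hence were absent from the old cloud, so the coefficient at height $t=\max\{1,j-(s-1)\}$ after substitution comes \emph{only} from $a_{ij}$ and $b_{i+1,j-1}$, through \eqref{eq:point-one-below-1} and \eqref{eq:point-one-below-2}, and these cannot both vanish without expelling $P$ from the polygon. Your ramified-chart argument never engages with this cancellation question, which is the actual denominator bookkeeping you anticipated but misplaced.

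Moreover, your premise fails in the dicritical case: a denominator-jumping exponent need not be a root of an equation of degree $\geq 2$ attached to a compact side. The line $L_{k/m}$ may meet $\mathcal{N}(\omega_{k-1})$ at a single vertex $(i,j)$; invariance then imposes only the vertex relation $a_{ij}+\frac{k}{m}\,b_{i+1,j-1}=0$, which determines the co-slope $k/m$ (whose denominator can perfectly well jump) while leaving $f_k$ completely free --- there is no indicial equation for $f_k$ at all, so your ``degree at least $2$, hence a side'' step collapses. This is precisely why the hypothesis of Lemma \ref{lem:strict-decrease} only asks that $P$ be the highest point of $L_{k/m}\cap\mathcal{N}(\omega_{k-1})$, not that a side of the old polygon be present, and why the abstract stresses that the bound holds whether the foliation is dicritical or not. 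A smaller but real bookkeeping slip: no drop can be forced at height $1$ (Lemma \ref{lem:strict-decrease} assumes $j>1$), so your count ``$r$ strict drops with heights $\geq 0$'' is not available as stated; the paper instead closes with the standard fact (\cite{CanoJ}) that once the support point sits at height $1$ at most one more Puiseux exponent can occur, yielding $r\leq (j-1)+1=j$.
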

Before proceeding, we require two elementary results.
\begin{lemma}\label{lem:strict-decrease}
  Let $P=(i,j)$ be the highest point of the line $L_{k/m}$ meeting the Newton polygon of $\omega_{k-1}$ at its border. Assume $f_{k}\neq 0$ and that $q_{k}=sq_{k-1}$ with $s>1$. If $j>1$, then the Newton polygon of $\omega_{k}$ contains both $(i,j)$ and $(i+(j-1) \frac{k}{m}, 1)$ if $s\geq j$ or $(i+(s-1) \frac{k}{m}, j-(s-1))$ otherwise.
\end{lemma}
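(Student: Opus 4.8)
The plan is to follow the change of variables $y=f_kx^{\mu}+\overline y$ (write $\mu=k/m$ and $f=f_k$) at the level of the coefficients sitting over the line $L_{\mu}$, and to exhibit the two claimed points as genuinely occupied points of the cloud $\mathcal C(\omega_k)$. First I would record the transformed form: since $\omega_{k-1}=a_{k-1}\,dx+b_{k-1}\,dy$ and $dy=f\mu x^{\mu-1}dx+d\overline y$, one gets $a_k=a_{k-1}(x,fx^{\mu}+\overline y)+f\mu x^{\mu-1}b_{k-1}(x,fx^{\mu}+\overline y)$ and $b_k=b_{k-1}(x,fx^{\mu}+\overline y)$. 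The substitution is homogeneous for the weight $(1,\mu)$, so it preserves $L_{\mu}$ (of weighted degree $\tau=i+\mu j$): a point of ordinate $v$ over $L_{\mu}$ has abscissa $\tau-\mu v$, and I only need to decide which of these points are occupied after substituting.

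The key structural input, which I would establish first, is that the ordinates of the cloud points of $\omega_{k-1}$ lying on $L_{\mu}$ are all congruent modulo $s$. Indeed, the abscissae of $\omega_{k-1}$ live on the lattice $\tfrac1{q_{k-1}}\mathbb Z$ determined by the exponents already used, so two such points $(p_1,q_1),(p_2,q_2)$ give $p_1-p_2=\mu(q_2-q_1)\in\tfrac1{q_{k-1}}\mathbb Z$; since $q_k=sq_{k-1}$ says precisely that $\mu q_{k-1}=\tfrac km q_{k-1}$ has reduced denominator $s$, this forces $s\mid(q_2-q_1)$, and as $(i,j)$ is the highest such point every one of them has ordinate in $\{j,j-s,j-2s,\dots\}$. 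Next I would expand the binomial series in the substitution to compute, for the point of ordinate $v$ over $L_{\mu}$, the two coefficients governing its occupancy in terms of $A(c)=\sum_{(p,q)\in L_{\mu}}a_{pq}c^{q}$ and $B(c)=\sum_{(p,q)\in L_{\mu}}b_{p+1,q-1}c^{q-1}$:
\[
(a_k)_{\tau-\mu v,\,v}=\tfrac1{v!}\bigl(A^{(v)}(f)+\mu f\,B^{(v)}(f)\bigr),
\qquad
(b_k)_{\tau-\mu v+1,\,v-1}=\tfrac1{(v-1)!}\,B^{(v-1)}(f).
\]
By the definition of the cloud, the ordinate-$v$ point of $L_{\mu}$ belongs to $\mathcal N(\omega_k)$ as soon as one of these two numbers is nonzero.

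With these formulas in hand the two assertions are short. For $v=j$ only the top monomials survive the derivatives ($\deg A\le j$, $\deg B\le j-1$), giving $(a_k)_{i,j}=a_{ij}$ and $(b_k)_{i+1,j-1}=b_{i+1,j-1}$; these are not both zero because $(i,j)\in\mathcal C(\omega_{k-1})$, so $(i,j)\in\mathcal N(\omega_k)$. For the lower point I would set $v=\max(1,j-s+1)$, so that $1\le v\le j$ and, by the mod-$s$ periodicity, $j$ is the only admissible ordinate that is $\ge v$ (the next one, $j-s$, is $<v$). Hence $A^{(v)}(f)$ isolates the coefficient $a_{ij}$ and $B^{(v-1)}(f)$ isolates $b_{i+1,j-1}$. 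If $b_{i+1,j-1}\ne0$ this already gives $(b_k)_{\tau-\mu v+1,v-1}=\binom{j-1}{v-1}b_{i+1,j-1}f^{\,j-v}\ne0$; if instead $b_{i+1,j-1}=0$ (so $a_{ij}\ne0$), then $B$ has no monomial of exponent $\ge v-1$ and $(a_k)_{\tau-\mu v,v}=\binom{j}{v}a_{ij}f^{\,j-v}\ne0$. As $1\le v\le j$ the binomial coefficients do not vanish over $\mathbb C$ and $f\ne0$, so the ordinate-$v$ point of $L_{\mu}$ lies in $\mathcal N(\omega_k)$. Its abscissa $\tau-\mu v=i+(j-v)\tfrac km$ equals $i+(j-1)\tfrac km$ when $s\ge j$ (then $v=1$) and $i+(s-1)\tfrac km$ when $s<j$ (then $v=j-s+1$), which are exactly the two points in the statement.

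The hard part is the periodicity step: everything downstream is a routine binomial expansion, valid because we work over $\mathbb C$, where the binomials $\binom{j}{v}$ and $\binom{j-1}{v-1}$ cannot vanish for $1\le v\le j$. What must be argued with care is that $s$ is genuinely the denominator of $\mu q_{k-1}$, i.e. the exact gap between consecutive admissible ordinates on $L_{\mu}$; this is what prevents the top monomial from being cancelled by a lower one before we reach ordinate $v$, and it is the only place where the hypothesis $q_k=sq_{k-1}$ with $s>1$ is used. I would therefore spend most of the write-up pinning down this lattice statement and would treat the coefficient computations as mechanical.
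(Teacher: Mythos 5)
Your proposal is correct and takes essentially the same route as the paper's proof: the same substitution $y=f_kx^{k/m}+\overline{y}$, the same use of the denominator jump $q_k=sq_{k-1}$ to empty the intermediate lattice points of ordinates $j-1,\dots,t=\max(1,j-(s-1))$ on $L_{k/m}$, the same two surviving coefficients (your $\binom{j}{v}a_{ij}f^{j-v}+\mu\binom{j-1}{v}b_{i+1,j-1}f^{j-v}$ and $\binom{j-1}{v-1}b_{i+1,j-1}f^{j-v}$ with $v=t$ are exactly the paper's displayed expressions \eqref{eq:point-one-below-1} and \eqref{eq:point-one-below-2}), and the same concluding dichotomy on $b_{i+1,j-1}$ versus $a_{ij}$. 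The only differences are presentational: you actually prove the mod-$s$ periodicity of the ordinates on $L_{k/m}$, which the paper merely asserts, and you check explicitly that $(i,j)$ itself survives the substitution, which the paper leaves implicit.
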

\begin{proof}
  Let $t=\max\{1,j-(s-1)\}$. As $s>1$, the points $(i+l \frac{k}{m}, j-l)$ do not belong to the cloud of points of $\omega_{k-1}$, for $l=1,\dots, j-t$. The fact that $P$ is the highest vertex of the polygon in $L_{k/m}$ implies that either $a_{ij}\neq 0$ or $b_{i+1,j-1}\neq 0$. In any case, the coordinate change $y=f_{k}x^{k/m}+\overline{y}$ gives rise, for the point $(i+(j-t) \frac{k}{m}, t)$ \emph{only} to the terms
  \begin{equation}\label{eq:point-one-below-1}
    f_{k}^{j-t}\left(\binom{j}{j-t} a_{ij} + \frac{k}{m}\binom{j-1}{j-t-1}
      b_{i+1,j-1}\right)x^{i+(j-t)k/m}\overline{y}^{t}dx
  \end{equation}
  and
  \begin{equation}\label{eq:point-one-below-2}
    f_{k}^{j-t}\binom{j-1}{j-t}b_{i+1,j-1}x^{i+(j-t)k/m+1}\overline{y}^{t-1}d\overline{y}
  \end{equation}
  (notice that $1\leq j-t\leq j-1$ and $j-t-2\geq 0$). In order for this point not to appear in the new Newton Polygon, both expressions must be $0$. We know that $f_{k}\neq 0$, so that necessarily, $b_{i+1,j-1}=0$, because \eqref{eq:point-one-below-2} must be $0$ and this implies that $a_{ij}=0$ in \eqref{eq:point-one-below-1}, which prevents $P$ from being in the Newton polygon of $\omega$, a contradiction.
\end{proof}
\begin{corollary}
  As a consequence, the highest point of $L_{(k+1)/m}$ is at height either $j-(s-1)$ or $1$.
\end{corollary}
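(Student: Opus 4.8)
The plan is to locate the contact point of $L_{(k+1)/m}$ directly from the two lattice points that Lemma~\ref{lem:strict-decrease} forces onto $\mathcal{N}(\omega_k)$. Writing $A=(i,j)$ and $B=(i+(j-t)\tfrac{k}{m},\,t)$ with $t=\max\{1,j-(s-1)\}$, the height $t$ is exactly $j-(s-1)$ when $s<j$ and $1$ when $s\geq j$, i.e.\ the two values in the statement. A one-line computation shows that the segment $AB$ has slope $\tfrac{t-j}{(j-t)k/m}=-\tfrac{m}{k}$, so $A$ and $B$ are collinear on a line $\ell$ of co-slope $k/m$; since $A=P$ already lies on $L_{k/m}(\omega_{k-1})$ with this same co-slope, $\ell$ is precisely $L_{k/m}(\omega_{k-1})$.

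First I would argue that $[A,B]$ is a side of $\mathcal{N}(\omega_k)$. By invariance of $\Gamma$, the recursive construction recalled in the introduction guarantees that $\ell=L_{k/m}(\omega_{k-1})$ meets $OX$ strictly to the left of $\mathcal{N}(\omega_k)$, so the polygon lies in the half-plane $x+\tfrac{k}{m}y\geq\tau$ bounded by $\ell$, where $\tau$ is the abscissa of $\ell\cap OX$. As $A$ and $B$ are two of its points lying on $\ell$, the whole segment $[A,B]$ is on the border and is a side of co-slope $k/m$ whose lowest vertex is $B$, at height $t$. In particular $L_{k/m}$ meets $\mathcal{N}(\omega_k)$ on a side, so Lemma~\ref{lem:lower-side} already yields that the highest point of $L_{(k+1)/m}\cap\mathcal{N}(\omega_k)$ lies strictly below height $j$.

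To pin the height to $t$ I would use the support functional $g(x,y)=x+\tfrac{k+1}{m}y$, whose minimizers over $\mathcal{N}(\omega_k)$ are exactly the points met by $L_{(k+1)/m}$. On $\ell$ one has $x+\tfrac{k}{m}y\equiv\tau$, hence there $g=\tau+\tfrac{1}{m}y$, strictly increasing in $y$; more generally, for any point of the polygon $g(x,y)=(x+\tfrac{k}{m}y)+\tfrac{1}{m}y\geq\tau+\tfrac{1}{m}y$. Thus every point of height $>t$ satisfies $g>\tau+\tfrac{t}{m}=g(B)$, so no contact point of $L_{(k+1)/m}$ can sit above height $t$, while $B$ realizes height $t$.

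The main obstacle is the matching statement that $L_{(k+1)/m}$ does not slide \emph{past} $B$ onto a strictly lower vertex; equivalently, that the side of $\mathcal{N}(\omega_k)$ immediately below $B$ has co-slope at least $(k+1)/m$. When $s\geq j$ this is immediate, since the only candidate below $B$ on $\ell$ is the height-$0$ point at abscissa $\tau$, which invariance excludes. The genuinely delicate case is $s<j$, where the continuation point $(i+s\tfrac{k}{m},\,j-s)$ still lies on $\ell$: here the hypothesis $q_k=sq_{k-1}$ must be used to show this point does not survive. I expect this to follow from the lattice refinement, namely that the substitution produces forced, uncancellable terms on the finer lattice $\tfrac{1}{q_k}\mathbb{Z}$ exactly for the heights $j-1,\dots,t$, whereas at height $j-s$ the abscissa returns to the coarse lattice $\tfrac{1}{q_{k-1}}\mathbb{Z}$ and is therefore not guaranteed; confirming that the border genuinely bends upward in co-slope at $B$ is the crux, and it is what ultimately forces the highest point of $L_{(k+1)/m}$ to sit at height $t=j-(s-1)$ or $t=1$.
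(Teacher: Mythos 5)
Your first two paragraphs are correct and are, in substance, the paper's own proof made explicit. The paper simply observes that the segment joining $P=(i,j)$ and the lower point of Lemma~\ref{lem:strict-decrease} (your $[A,B]$; note the paper's proof writes $Q=(i+t\frac{k}{m},j-t)$, an evident typo for your $B$) has co-slope $k/m$, and then invokes Lemma~\ref{lem:lower-side} to conclude that the continuation must use ``a vertex which is $Q$ or lower, or a side which starts at $Q$ or lower.'' Your support functional $g(x,y)=x+\tfrac{k+1}{m}y\geq \tau+\tfrac{y}{m}$ is a clean quantitative rendering of exactly that step. One small repair: the containment $\mathcal{N}(\omega_k)\subset\{x+\tfrac{k}{m}y\geq\tau\}$ does not follow from the statement that $\ell$ meets $OX$ strictly to the left of $\mathcal{N}(\omega_k)$ (the polygon need not meet $OX$ at all, so that statement can be vacuous); it follows, with no appeal to invariance, from the elementary fact that the substitution $y=f_kx^{k/m}+\overline{y}$ sends each monomial at $(i',j')$ to monomials at $(i'+l\tfrac{k}{m},\,j'-l)$, all with the same value of $x+\tfrac{k}{m}y$, so that the minimum of this functional over the cloud cannot decrease.

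Your last paragraph, however, chases a claim the paper neither proves nor uses: exact contact of $L_{(k+1)/m}$ at height $t$. The corollary, as the paper proves it, is the ``$Q$ or lower'' bound --- the two heights $j-(s-1)$ and $1$ are the values of $t$ in the two cases $s<j$ and $s\geq j$, serving as upper bounds for the next contact --- and the proof of Theorem~\ref{the:bound-number-puiseux-exponents} uses nothing beyond this strict drop below $j$ (a lower contact only helps the recurrence). Moreover, exactness in the case $s<j$ genuinely cannot be extracted from the hypotheses: on the refined lattice $\tfrac{1}{q_k}\mathbb{Z}$ nothing excludes a cloud point of $\omega_k$ strictly below $B$ and strictly to the right of $\ell$, joined to $B$ by a side of co-slope strictly between $k/m$ and $(k+1)/m$, in which case $L_{(k+1)/m}$ touches below height $t$. (In the case $s\geq j$ your remark is sound: the bound gives height $\leq 1$, and continued invariance excludes a highest contact at height $0$, so there the height is pinned to $1$.) So the ``crux'' you isolate is not a gap in a proof of the corollary as intended, but a sign you read the statement as stronger than the paper's proof establishes; delete the final paragraph and what remains is complete and essentially identical to the paper's argument.
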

\begin{proof}
  Because the segment joining $P=(i,j)$ and $Q=(i+t \frac{k}{m},j-t)$ in the previous proof has co-slope $k/m$, the only way to continue following $\Gamma$ as a solution of $\omega$, by Lemma \ref{lem:lower-side} is either using a vertex which is $Q$ or lower, or a side which starts at $Q$ or lower, which gives the result.
\end{proof}
\begin{proof}[Proof of the theorem]
  The theorem is now proved by recurrence. As $\Gamma$ is not tangent to the $OY$ axis, its Puiseux expansion $\Gamma\equiv \sum f_kx_k^{k/m}$ starts with $k\geq m$. Let $j$ be the $y-$order of $\omega=\omega_0=\dots=\omega_{m-1}$. By Lemma \ref{lem:strict-decrease}, the first Puiseux exponent $\mu_1 = k_1/m$ gives rise, in the next Newton polygon (that of $\omega_{k_1}$) to a side  of co-slope $\mu_1$ whose lowest vertex has height strictly less than $j$ unless $j=1$. By recurrence, one sees that, if $r\geq j$, then the $j-1-$th Puiseux exponent gives rise (on the appropriate Newton polygon) to a side whose highest vertex is at height $1$. At this point, it is well known \cite{CanoJ} that only one more Puiseux exponent can appear, and we are done.

  As the $y-$order is less than or equal to the order of an analytic differential form, the consequence follows easily.
\end{proof}


\begin{thebibliography}{1}

\bibitem{CanoJ}
J.~Cano.
\newblock On the series defined by differential equations, with an extension of
  the {P}uiseux {P}olygon construction to these equations.
\newblock {\em Analysis}, (13):103--117, 1993.

\bibitem{Carnicer}
M.~Carnicer.
\newblock The {Poincar{\'e}} {Problem} in the non-dicritical case.
\newblock {\em Ann. of Math.}, 140:289--294, 1994.

\bibitem{Cerveau-Neto-1991}
D.~Cerveau and A.~Lins Neto.
\newblock Holomorphic foliations in {${\mathbb C}{\mathbb P}(2)$} having an
  invariant algebraic curve.
\newblock {\em Ann. Inst. Fourier}, 41(4):883--903, 1991.

\bibitem{Ince}
E.~L. Ince.
\newblock {\em Ordinary Differential Equations}.
\newblock Dover, New York, 1956.

\bibitem{Poincare2}
H.~Poincar{\'e}.
\newblock Sur l'int{\'e}gration alg{\'e}brique des {\'e}quations
  diff{\'e}rentielles du premier ordre et du premier degr{\'e} ({I} and {II}).
\newblock {\em Rendiconti del circolo matematico di Palermo}, 5 and 11:161--191
  and 193--239, 1891,1897.

\end{thebibliography}
\end{document}